\begin{document}
\author{\textbf{Denis~I.~Saveliev}
}
\title{\textbf{Ultrafilter extensions of linear orders}}
\date{July 2013}
\maketitle

\theoremstyle{plain}
\newtheorem{thm}{Theorem}
\newtheorem*{tm}{Theorem}
\newtheorem{lmm}{Lemma}
\newtheorem*{lm}{Lemma}
\newtheorem*{prp}{Proposition}
\newtheorem{coro}{Corollary}
\newtheorem*{cor}{Corollary}

\theoremstyle{definition}
\newtheorem*{df}{Definition}
\newtheorem*{exm}{Example}
\newtheorem*{prb}{Problem}
\newtheorem*{ack}{Acknowledgements}

\theoremstyle{remark}
\newtheorem*{pf}{Proof}
\newtheorem*{rmk}{Remark}
\newtheorem*{q}{Question}
\newtheorem*{qs}{Questions}
\newtheorem*{ex}{Example}
\newtheorem*{exs}{Examples}
\newtheorem*{fact}{Fact}
\newtheorem*{facts}{Facts}
\newtheorem*{task}{Task}

\newcommand{\Fr}{\mathop{\mathrm {Fr}}\nolimits}
\newcommand{\ind}{\mathop{\mathrm {ind}}\nolimits}
\newcommand{\Ind}{\mathop{\mathrm {Ind}}\nolimits}
\newcommand{\spec}{\mathop{\mathrm {spec}}\nolimits}
\newcommand{\specm}{\mathop{\mathrm {specm}}\nolimits}
\newcommand{\fix}{\mathop{\mathrm {fix}}\nolimits}
\newcommand{\pcf}{\mathop{\mathrm {pcf}}\nolimits}
\newcommand{\crit}{\mathop{\mathrm {crit}}\nolimits}
\newcommand{\supp}{\mathop{\mathrm {supp}}\nolimits}
\newcommand{\cs}{ {\mathop{\mathrm {cs}}\nolimits} }
\newcommand{\club}{ {\mathop{\mathrm {club}}\nolimits} }
\newcommand{\id}{ {\mathop{\mathrm {id}}\nolimits} }
\newcommand{\pr}{ {\mathop{\mathrm {pr}}\nolimits} }
\newcommand{\cnc}{ {^\frown} }

\newcommand{\cl}{ {\mathop{\mathrm {cl\,}}\nolimits} }
\newcommand{\cof}{ {\mathop{\mathrm {cof\,}}\nolimits} }
\newcommand{\add}{ {\mathop{\mathrm {add\,}}\nolimits} }
\newcommand{\sat}{ {\mathop{\mathrm {sat\,}}\nolimits} }
\newcommand{\tc}{ {\mathop{\mathrm {tc\,}}\nolimits} }
\newcommand{\unif}{ {\mathop{\mathrm {unif\,}}\nolimits} }
\newcommand{\fp}{ {\mathop{\mathrm {fp\,}}\nolimits} }
\newcommand{\fs}{ {\mathop{\mathrm {fs\,}}\nolimits} }
\newcommand{\dom}{ {\mathop{\mathrm {dom\,}}\nolimits} }
\newcommand{\ran}{ {\mathop{\mathrm{ran\,}}\nolimits} }
\newcommand{\cf}{ {\mathop{\mathrm {cf\,}}\nolimits} }
\newcommand{\ot}{ {\mathop{\mathrm {ot\,}}\nolimits} }
\newcommand{\image}{\/``\,}
\newcommand{\scc}{\beta\!\!\!\!\beta}
\newcommand{\FS}{ {\mathop{\mathrm {FS\,}}\nolimits} }
\newcommand{\FP}{ {\mathop{\mathrm {FP\,}}\nolimits} }

\newcommand{\uhr}{\!\upharpoonright\!}
\newcommand{\lra}{ {\leftrightarrow} }
\newcommand{\ol}{\overline}

\newcommand{\bigforall}{ {\mathop{\raisebox{-2pt}{\text{\Large$\forall$\/}}}} }
\newcommand{\bigexists}{ {\mathop{\raisebox{-2pt}{\text{\Large$\exists$\/}}}} }
\newcommand{\btd}{ {\mathop{\raisebox{1pt}{\text{\Large$\bigtriangledown$\/}\!}}} }
\newcommand{\btu}{ {\mathop{\raisebox{-2pt}{\text{\Large$\bigtriangleup$\/}}}} }
\newcommand{\bbtd}{ {\mathop{\raisebox{1pt}{\text{\LARGE$\bigtriangledown$\/}\!}}} }
\newcommand{\bbtu}{ {\mathop{\raisebox{-2pt}{\text{\LARGE$\bigtriangleup$\/}}}} }

\newcommand{\Lim}{\mathrm{Lim}}
\newcommand{\Ord}{ {\mathrm {Ord}} }
\newcommand{\Card}{ {\mathrm {Card}} }
\newcommand{\Reg}{\mathrm{Reg}}
\newcommand{\C}{\mathrm{C}}
\newcommand{\CA}{ {\mathrm {CA}} }
\newcommand{\J}{ {\mathrm J} }
\newcommand{\PC}{ {\mathrm {PC}} }
\newcommand{\PRA}{ {\mathrm {PRA}} }
\newcommand{\T}{ {\mathrm T} }
\newcommand{\TA}{ {\mathrm {TA}} }
\newcommand{\PA}{ {\mathrm {PA}} }
\newcommand{\KP}{ {\mathrm {KP}} }
\newcommand{\Z}{ {\mathrm Z} }
\newcommand{\ZF}{ {\mathrm {ZF}} }
\newcommand{\ZFA}{ {\mathrm {ZFA}} }
\newcommand{\ZFC}{ {\mathrm {ZFC}} }
\newcommand{\AEx}{ {\mathrm {AE}} }
\newcommand{\AR}{ {\mathrm {AR}} }
\newcommand{\WR}{ {\mathrm {WR}} }
\newcommand{\AF}{ {\mathrm {AF}} }
\newcommand{\AC}{ {\mathrm {AC}} }
\newcommand{\GC}{ {\mathrm {GC}} }
\newcommand{\DC}{ {\mathrm {DC}} }
\newcommand{\AD}{ {\mathrm {AD}} }
\newcommand{\AInf}{ {\mathrm {AInf}} }
\newcommand{\AU}{ {\mathrm {AU}} }
\newcommand{\AP}{ {\mathrm {AP}} }
\newcommand{\PI}{ {\mathrm {PI}} }
\newcommand{\CH}{ {\mathrm {CH}} }
\newcommand{\GCH}{ {\mathrm {GCH}} }
\newcommand{\APr}{ {\mathrm {APr}} }
\newcommand{\ASp}{ {\mathrm {ASp}} }
\newcommand{\ARp}{ {\mathrm {ARp}} }
\newcommand{\AFA}{ {\mathrm {AFA}} }
\newcommand{\RK}{ {\mathrm {RK}} }

\begin{abstract}
Ultrafilter extensions of arbitrary first-order models were defined 
in~\cite{Saveliev}. Here we consider the case when the models are linearly 
ordered sets. We explicitly calculate the extensions of a~given linear order 
and the corresponding operations of minimum and maximum on a~set. We show 
that the extended relation is not more an order but is close to the natural 
linear ordering of nonempty half-cuts of the set and that the two extended 
operations define a~skew lattice structure on the set of ultrafilters. 
\end{abstract}


\subsection*{1. Preliminaries}

Ultrafilter extensions of arbitrary first-order models were defined 
in~\cite{Saveliev}. If $(X,F,\ldots,P,\ldots)$ is a~model with the 
universe~$X$, operations~$F,\ldots$\,, and relations~$P,\ldots$\,, 
it canonically extends to the model $(\scc X,\widetilde F,\ldots,
\widetilde P,\ldots)$ (of the same language), where $\scc X$~is 
the set of ultrafilters over~$X$, the operations $\widetilde F,\ldots$
extend the operations $F,\ldots$\,, and the relations $\widetilde P,\ldots$
extend the relations $P,\ldots$\,. Here $X$~is considered as a~subset 
of~$\scc X$ by identifying each element~$x$ in~$X$ with the principal 
ultrafilter~$\widetilde x$ given by~$x$. The main result of~\cite{Saveliev} 
shows that, roughly speaking, the construction smoothly generalizes the 
Stone--\v{C}ech compactification of a~discrete space to the situation 
when the space carries a~first-order structure.

The principal precursor of this construction was 
ultrafiter extensions of semigroups, the technique invented in~60s 
and then used to obtain significant results in number theory, algebra, 
and topological dynamics; the book~\cite{Hindman Strauss} is 
a~comprehensive treatise of this field. For the general definition of 
the extension, a~description of topological properties of the extended 
models, and the precise formulation of the aforementioned result,
we refer the reader to~\cite{Saveliev}.

In this note we consider a~rather special case of models, namely, 
linearly ordered sets. We shall deal only with binary relations and 
operations. If $R$~is a~binary relation on a~set~$X$, it extends 
to the binary relation~$\widetilde R$ on the set~$\scc X$ defined by 
\begin{align*}
u\,\widetilde R\:v
\;\;\lra\;\;
\bigl\{x\in X:\{y\in X:x\,R\:y\}\in v\bigr\}\in u
\end{align*}
for all ultrafilters $u,v\in\scc X$, and if $F$~is a~binary operation on~$X$, 
it extends to the binary operation~$\widetilde F$ on~$\scc X$ defined by  
\begin{align*}
S\in\widetilde F(u,v)
\;\;\lra\;\;
\bigl\{x\in X:\{y\in X:F(x,y)\in S\}\in v\bigr\}\in u
\end{align*}
for all $u,v\in\scc X$ and all $S\subseteq X$. 
The relations and operations considered here are definable from a~given 
linear order~$<$, namely, the orders $<$ and~$\le$, the converse orders 
$>$ and~$\ge$, and the operations of minimum and maximum.


As~usually, a~transitive binary relation is a~{\it pre-order\/} 
({\it order\/}, {\it strict order\/}) iff it is reflexive (reflexive 
and antisymmetric, irreflexive); it is called {\it linear\/}, or {\it total\/}, 
iff it is connected, i.e.~any two distinct elements are comparable. 
We use the standard notation $\le$~for (pre-)orders and $<$~for 
strict orders, and its variants. By a~{\it linearly ordered set\/} we mean 
as $(X,\le)$ as well as $(X,<)$, and usually write simply~$X$. 
A~subset~$I$ of a~linearly ordered~$X$ is its {\it initial 
segment\/} iff it is downward closed, i.e.~iff $y\in I$ implies $x\in I$ 
for all $x<y$; {\it final segments\/} are upward closed subsets.
A~pair $(I,J)$ is a~{\it cut\/} of a~linearly ordered set~$X$ iff
$I$ and~$J$ are an initial and a~final segments of~$X$ forming 
its partition (so $x<y$ for all $x\in I$ and $y\in J$); we shall call 
$I$ and~$J$ the (left and right) {\it half-cuts\/}.
A~cut $(I,J)$ is {\it proper\/} iff both $I$ and~$J$ are nonempty,
a~{\it jump\/} iff $I$~has the greatest element and $J$~the smallest one,
a~{\it gap\/} iff neither of these two happens, and a~{\it Dedekind cut\/} 
iff only one happens, i.e.~either $I$~has the greatest element but $J$~does 
not have the smallest one, or conversely. A~linearly ordered set is 
{\it dense\/} iff it has no jumps, {\it complete\/} iff it has no proper gaps, 
and {\it continuous\/} iff it is dense and complete (so has only Dedekind 
cuts). The {\it Dedekind completion\/} of~$X$ is the smallest complete set 
containing~$X$, which is obtained by adding to~$X$ all its proper gaps; 
if one adds also improper gaps, the resulting set is the smallest ordered 
compactification of~$X$ (w.r.t.~the interval topology). Arbitrary ordered 
compactification of~$X$ has either one or two elements filling each proper 
gap of~$X$, so the family of all ordered compactifications of~$X$ is isomorphic 
to the powerset of the set of its proper gaps (see \cite{Fedorchuk66}--\cite{Kaufman} 
and recent review~\cite{Bezhanishvili Morandi}). For more on linearly ordered sets 
we refer the reader to~\cite{Rosenstein}.


N.~L.~Poliakov asked me about the ultrafilter extensions of 
a~linear order and the corresponding operations of minimum and maximum. 
He hypothesed\,\footnote{Personal communication. July, 2013.} that 
the ultrafilter extension of a~linear order on a~set is a~linear pre-order 
whose quotient is isomorphic to the natural ordering of cuts of the set. 
Here it is proved that his attempt to describe the extension works 
for well-orders and in general is, though not correct, rather close 
to be correct: the extension itself is not a~pre-order but a~certain its 
combination with the extension of the converse order gives a~pre-order 
whose quotient is isomorphic to the natural ordering of half-cuts.

The structure of this note is as follows.
In Section~2 we define, for every ultrafilter~$u$ over a~linearly ordered set,
its support, which is either the element generating~$u$ if $u$~is principal, 
or a~half-cut otherwise. We note that the natural linear ordering of supports 
connects to the largest linearly ordered compactification of the set.
In Section~3 we describe the ultrafilter extensions of given order relations 
$<,\le,>,\ge$ in terms of supports of ultrafilters. Then we show that 
$\widetilde<,\widetilde\le,\widetilde>,\widetilde\ge$ do not share many 
features of orders, however, can be ``amalgamed" into a~linear pre-order 
inducing the natural ordering of supports. In Section~4 we describe ultrafilter 
extensions of the operations $\min$ and $\max$ in terms of supports, show that, 
except for commutativity, $\widetilde\min$ and $\widetilde\max$ have the usual 
features of $\min$ and $\max$ on a~linearly ordered set and, actually, turn out 
the set of ultrafilters into a~distributive skew lattice of a~special form. 
Finally, we show that the equivalence~$D$ on the skew lattice coincides with 
the equality of supports and the quotient lattice $\scc X/D$ is isomorphic
to the set of supports with its operations $\min$ and~$\max$. We conclude by 
asking about properties of ultrafilter extensions of partially ordered sets 
and related algebras. The note is quite easy and self-contained.


\subsection*{2. Supports of ultrafilters over linearly ordered sets}

Let $X$~be a~linearly ordered set.
For any ultrafilter~$u$ over~$X$ define
the initial segment $I_u$ and the final segment $J_u$
of~$X$ as follows:
\begin{align*}
I_u
&=\,
\bigcap\,\{
I\in u:
I\text{ is an initial segment of }X\},
\\
J_u
&=\,
\bigcap\,\{
J\in u:
J\text{ is a~final segment of }X\}.
\end{align*}

\begin{lmm}
Let $X$~be a~linearly ordered set 
and\, $u$~an ultrafilter over~$X$.
\\
1. 
If $u$~is principal, then $I_u\cap J_u=\{x\}$ where $u=\widetilde x$. 
\\
2. 
If $u$~is non-principal, then $(I_u,J_u)$ is a~cut,
and either $I_u$ or $J_u$, but not both, is in~$u$.
\\
3.
If $I_u$ is in~$u$, then so are all final segments of~$I_u$,\,
$S\cap I_u$ is cofinal in~$I_u$ for all $S\in u$, and 
$I_u$~does not have the greatest element whenever $u$~is non-principal.
\\ 
4. 
If $J_u$ is in~$u$, then so are all initial segments of~$J_u$,\, 
$S\cap J_u$ is coinitial in~$J_u$ for all $S\in u$, and 
$J_u$~does not have the least element whenever $u$~is non-principal. 
\end{lmm}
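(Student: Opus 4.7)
My approach rests on a single membership criterion: for every $z\in X$, $z\in I_u$ iff $(-\infty,z)\notin u$, equivalently $[z,\infty)\in u$; dually, $z\in J_u$ iff $(z,\infty)\notin u$, equivalently $(-\infty,z]\in u$. The forward direction is clear, and the reverse follows since any initial segment excluding $z$ is contained in $(-\infty,z)$. Combining both, $z\in I_u\cap J_u$ iff both $[z,\infty)$ and $(-\infty,z]$ lie in $u$, iff $\{z\}\in u$, iff $u=\widetilde z$.

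From this criterion, Parts~1 and~2 follow directly. For Part~1, $u=\widetilde x$ gives $z\in I_u\cap J_u$ exactly when $z=x$. For Part~2 with $u$ non-principal, the criterion yields $I_u\cap J_u=\emptyset$; moreover, if some $z$ lay in neither $I_u$ nor $J_u$, then both $(-\infty,z)$ and $(z,\infty)$ would belong to $u$, absurd since their intersection is empty. Hence $(I_u,J_u)$ partitions $X$, and from $X\in u$ the ultrafilter property forces exactly one of $I_u,J_u$ to belong to $u$.

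Parts~3 and~4 are symmetric, so I treat Part~3. The key step is that every final segment $F$ of $I_u$ belongs to $u$: the complement $I_u\setminus F$ is again an initial segment of $X$ (any $y<x$ with $x\in I_u\setminus F$ lies in $I_u$ by initiality and outside $F$ by finality of $F$ in $I_u$), and if $F\ne\emptyset$ then $I_u\setminus F\subsetneq I_u$, so by minimality of $I_u$ we cannot have $I_u\setminus F\in u$; since $I_u=(I_u\setminus F)\cup F\in u$ is a disjoint decomposition, the ultrafilter property yields $F\in u$. The cofinality claim then drops out: were $S\cap I_u$ bounded above in $I_u$ by some $y\in I_u$, the final segment $[y,\infty)\cap I_u$ would lie in $u$ yet be disjoint from $S$, contradicting $S\in u$. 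Finally, if $I_u$ had a greatest element $z$, then $\{z\}$ would itself be a final segment of $I_u$, hence $\{z\}\in u$, making $u$ principal.

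The only step requiring a little care is verifying that $I_u\setminus F$ is really an initial segment of $X$ rather than merely of $I_u$; beyond that, everything reduces to straightforward ultrafilter bookkeeping built on the initial criterion.
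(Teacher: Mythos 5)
Your proof is correct, and since the paper disposes of this lemma with the single word ``Easy,'' there is no official argument to compare against; your membership criterion ($z\in I_u$ iff $X_{\ge z}\in u$, dually for $J_u$) is exactly the kind of bookkeeping the author is suppressing, and it cleanly yields all four parts. The only cosmetic caveat is that ``all final segments of $I_u$'' must be read as ``all nonempty final segments,'' which your case split on $F\ne\emptyset$ already handles.
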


\begin{proof} 
Easy.
\end{proof}


Define the {\it support\/} $\supp(u)$ of an ultrafilter~$u\in\scc X$ by
\begin{align*}
\supp(u)=
\left\{
\begin{array}{cl}
\{x\}&\text{if }\,u=\widetilde x
\text{ and }x\in X,  
\\
\rule{0em}{1.2em}
I_u&\text{if }\,u\in\scc X\setminus X
\text{ and }I_u\in u,  
\\
\rule{0em}{1.2em}
J_u
&\text{if }\,u\in\scc X\setminus X
\text{ and }J_u\in u.
\end{array}
\right.
\end{align*}
Thus supports of ultrafilters over~$X$ are subsets of~$X$ which are
either singletons, or initial segments without the last point, or else 
final segments without the first point, and it is clear that any subset 
of one of the three forms is the support of some ultrafilter.

\begin{ex}
If $X$~is well-ordered and $u\in\scc X\setminus X$, then $\supp(u)=I_u$.
If $\alpha$~is an ordinal and $u\in\scc\alpha\setminus\alpha$, then 
$\supp(u)$~is a~limit ordinal~$\beta\le\alpha$.
\end{ex}

This notion of supports, however, should be slightly refined. 
Let $X$~have no end-points (e.g.~$X$~is the set~$\mathbb Z$ of integers 
with their natural ordering), and let $u\in\scc X$~have all initial segments 
of~$X$ and $v\in\scc X$~all final segments of~$X$. Then $\supp(u)=J_u=X$ and 
$\supp(v)=I_v=X$, which shows that our notion cannot distinguish ultrafilters 
``concentrated" at the beginning and at the end of the set.
There are several ways to correct this. E.g.~in such cases we could define
the supports as $\{-\infty\}$ and $\{+\infty\}$ (in fact, adding end-points 
to the set); or we could define the support of an~$u$ as a~pair~--- either 
$(I_u,J_u)$ or $(J_u,I_u)$ depending on what of $I_u$ and~$J_u$ is in~$u$.
We prefer, however, to keep the definition above but understand henceforth 
the expressions ``$\supp(u)=I_u$" by ``$u$~is non-principal and all final 
segments of~$I_u$ are in~$u$" and  ``$\supp(u)=J_u$" by ``$u$~is non-principal 
and all initial segments of~$J_u$ are in~$u$".


The set of supports carries a~natural linear order:
$\supp(u)<\supp(v)$ iff either the cut given by $\supp(u)$ is less than 
the cut given by $\supp(v)$, or $\supp(u)$~is the initial segment and 
$\supp(v)$~is the final segment of the same cut. 
All possible cases are listed in the following table:
\begin{align}\label{ordering supports}
\begin{array}{r|rrr}
\supp(u)<\supp(v)
&\;\supp(v)=\{y\}&\;\supp(v)=I_v\;\;&\supp(v)=J_v\;
\phantom{\underbrace{|}}
\\
\hline    
\supp(u)=\{x\}\;&
\;x<y\quad&\;x<\sup I_v&\;\;x\le\inf J_v
\phantom{\overbrace{|}}    
\\
\supp(u)=I_u\;\;\;\,&
\;\sup I_u\le y\quad&\;\sup I_u<\sup I_v&\;\;\sup I_u\le\inf J_v
\phantom{\overbrace{0}}    
\\
\supp(u)=J_u\;\;\;&
\;\inf J_u<y\quad&\;\inf J_u<\sup I_v&\;\;\inf J_u<\inf J_v
\phantom{\overbrace{0}}    
\end{array}
\end{align}
(which should be read as follows: 
``if $\supp(u)=\{x\}$ and $\supp(v)=\{y\}$, 
then $\supp(u)<\supp(v)$ is equivalent to $x<y$", etc.)~providing 
that $\sup$ and $\inf$ are in the Dedekind completion of~$X$.

\par 

Given a~linearly ordered set~$X$, let $s(X)$~denote the set of 
the supports of ultrafilters over~$X$ with their natural ordering. 
The transition from a~linearly ordered set~$X$ to the linearly ordered 
set~$s(X)$ is a~procedure similar to the Dedekind completion of~$X$ or, 
rather, the ordered compacti\-fi\-ca\-tion of~$X$; however, while the 
latter two add to the set only its gaps, the former one adds all its 
unbounded {\it half-cuts\/} (rather than cuts), i.e.~initial segments 
without the greatest element and final segments without the least element.
Note also that both completion and compactification procedures are idempotent 
(i.e.~their iterations do not change sets) while our construction is not.

If $X$~is of the order-type~$\tau$, let $s(\tau)$ denote the order-type 
of~$s(X)$. As it is customarily in linear order theory, the letters $\zeta$, 
$\eta$, $\lambda$ are used to denote the order-types of the sets $\mathbb Z$, 
$\mathbb Q$, $\mathbb R$ of integers, rationals, reals, respectively;
the multiplication of order-types is antilexicographic (e.g.~$2\omega=\omega$,
$\omega2=\omega+\omega$); for more details see~\cite{Rosenstein}.

\begin{exs}

1. 
$s(\omega)=\omega+1$. Moreover, for all ordinals~$\alpha$,
$s(\omega+\alpha)=\omega+\alpha+1$.

2. 
$s(\zeta)=1+\zeta+1$. Moreover, for all ordinals~$\alpha$,
$s(\alpha\zeta)=1+\alpha\zeta+1$.

3. 
$s(\lambda)=1+3\lambda+1$. Moreover, for all continuous 
order-types~$\tau$, $s(\tau)=1+3\tau+1$.

4. 
$s(\eta)=1+\sum_{x\in\mathbb R}\tau_x+1$ where 
$\tau_x=3$ if $x\in\mathbb Q$, and $\tau_x=2$ otherwise.
Moreover, for all dense order-type~$\tau$,
$s(\tau)=1+\sum_{x\in Y}\tau_x+1$ where 
$\tau_x=3$ if $x\in X$, and $\tau_x=2$ otherwise,
whenever $X$~is any set of the order-type~$\tau$ 
and $Y$~the Dedekind completion of~$X$. 
\end{exs}


\subsection*{3. Ultrafilter extensions of linear orders}

The following theorem describes the ultrafilter extensions
of linear orders in terms of supports.

\begin{thm}
For all ultrafilters~$u,v$ over a~linearly ordered set~$X$,
\begin{align*}
u\:{\widetilde<}\:v
&\;\;\lra\;\;
\supp(u)<\supp(v)\;\vee\;
\supp(u)=\supp(v)=I_u=I_v,
\\
u\:{\widetilde\le}\:v
&\;\;\lra\;\;
\supp(u)<\supp(v)\;\vee\;
\supp(u)=\supp(v)=I_u=I_v\;\vee\;
\exists x\,(u=v=\widetilde x),
\\
\rule{0em}{1em}
u\:{\widetilde>}\:v
&\;\;\lra\;\;
\supp(u)>\supp(v)\;\vee\;
\supp(u)=\supp(v)=J_u=J_v,
\\
u\:{\widetilde\ge}\:v
&\;\;\lra\;\;
\supp(u)>\supp(v)\;\vee\;
\supp(u)=\supp(v)=J_u=J_v\;\vee\;
\exists x\,(u=v=\widetilde x).
\end{align*}
Consequently, on non-principal ultrafilters, 
${\widetilde<}$~coincides with~${\widetilde\le}$ and 
${\widetilde>}$~coincides with~${\widetilde\ge}$.
\end{thm}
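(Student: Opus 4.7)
The plan is to unfold the definition of each extended relation and reduce the biconditional to a $3\times 3$ case analysis on the types of $\supp(u)$ and $\supp(v)$ (principal, left half-cut, right half-cut), each case matching an entry of the table~\eqref{ordering supports}. The bulk of the work is for $\widetilde<$; the remaining three relations will be handled by small variants of the same argument.

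For $\widetilde<$, unfold $u\,\widetilde<\,v\iff\{x:(x,\infty)\in v\}\in u$ with $(x,\infty)=\{y:x<y\}$. The first step is to describe, for fixed $x\in X$, when the final segment $(x,\infty)$ lies in $v$. If $v=\widetilde y$, trivially iff $x<y$. If $v$ is non-principal, the key unifying observation is that $(x,\infty)\in v\iff x\in I_v$, regardless of whether $\supp(v)=I_v$ or $\supp(v)=J_v$: for $x\in I_v$, either $(x,\infty)\cap I_v$ is a non-empty final segment of $I_v$ (when $\supp(v)=I_v$), or $(x,\infty)\supseteq J_v$ (when $\supp(v)=J_v$), both delivering $(x,\infty)\in v$ by Lemma~1; for $x\in J_v$, either $(x,\infty)\subseteq J_v\notin v$, or $(x,\infty)$ is disjoint from the non-empty initial segment $J_v\cap(-\infty,x]$ of $J_v$, which lies in $v$ by Lemma~1. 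Hence the outer set $A:=\{x:(x,\infty)\in v\}$ equals $(-\infty,y)$ when $v=\widetilde y$ and equals $I_v$ in both non-principal cases.

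The second step tests whether the initial segment $A$ lies in $u$. A parallel split via Lemma~1 yields: $A\in u\iff x\in A$ when $u=\widetilde x$; $A\in u\iff I_u\subseteq A$ when $\supp(u)=I_u$ (else some non-empty final segment of $I_u$ in $u$ is disjoint from $A$); and $A\in u\iff A\cap J_u\ne\emptyset$ when $\supp(u)=J_u$ (then $A\cap J_u$ is a non-empty initial segment of $J_u$, hence in $u$; else $A\subseteq I_u\notin u$). Crossing the three forms of $A$ with the three types of $u$ yields the nine entries of~\eqref{ordering supports}, via the Dedekind-completion identities $I_u\subseteq(-\infty,y)\iff\sup I_u\le y$, $I_u\subseteq I_v\iff\sup I_u\le\sup I_v$, and $I_v\cap J_u\ne\emptyset\iff\inf J_u<\sup I_v$. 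The disjunct $\supp(u)=\supp(v)=I_u=I_v$ captures precisely the boundary case $\sup I_u=\sup I_v$, since two initial segments without greatest element sharing a supremum must coincide.

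The three remaining relations follow the same template. For $\widetilde\le$ only the inner set changes to $[x,\infty)$; non-principality of $v$ forces $\{x\}\notin v$, so $[x,\infty)\in v\iff(x,\infty)\in v$, and the sole difference from $\widetilde<$ is the principal diagonal $u=v=\widetilde x$, recorded as the disjunct $\exists x\,(u=v=\widetilde x)$. The relations $\widetilde>$ and $\widetilde\ge$ are treated dually with the roles of $I$ and $J$ swapped throughout and the inner sets becoming $(-\infty,x)$ and $(-\infty,x]$. The concluding coincidence of $\widetilde<$ with $\widetilde\le$ and of $\widetilde>$ with $\widetilde\ge$ on non-principal ultrafilters is then immediate, since the principal diagonal disjunct is the sole difference within each pair. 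The main subtlety throughout is the Dedekind-completion bookkeeping in the nine-case matching, particularly recognising when the equality-of-supports disjunct is activated versus absorbed into a strict inequality of cut-points.
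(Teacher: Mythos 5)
Your proof is correct and follows essentially the same route as the paper's: unfold $u\,\widetilde<\,v$ to $\{x:X_{>x}\in v\}\in u$, compute the outer set by cases on $\supp(v)$ (your unified observation that it equals $I_v$ for all non-principal $v$ matches the paper's two table entries $X_{<\sup I_v}$ and $X_{\le\inf J_v}$), then test membership in $u$ by cases on $\supp(u)$ and compare the resulting nine entries with the support-ordering table. The only cosmetic divergence is that you obtain $\widetilde\le$ by noting the two tables differ only at the principal diagonal, whereas the paper decomposes $x\le y$ as $x<y\vee x=y$ and commutes the ultrafilter quantifiers with the disjunction; both are fine.
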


\begin{proof} 
Let $X_{<x}$~denote the initial segment $\{y\in X:y<x\}$, 
and $X_{\le x}$, $X_{>x}$, $X_{\ge x}$ have the expected meaning.
By definition, $u\,{\widetilde<}\,v$ means $\{x:X_{>x}\in v\}\in u$. 
First, we observe that 
\begin{align*}
\qquad\quad\;\;\;
\begin{array}{c|ccc}
&\;\supp(v)=\{y\}&\;\supp(v)=I_v\;\;&\;\supp(v)=J_v\;
\phantom{\underbrace{|}}
\\
\hline    
X_{<x}\in v\;&
y<x&\sup I_v\le x&\;\inf J_v<x
\phantom{\overbrace{0}}    
\\
X_{\le x}\in v\;&
y\le x&\sup I_v\le x&\;\inf J_v<x
\phantom{\overbrace{0}}    
\\
X_{>x}\in v\;&
x<y&x<\sup I_v&\;x\le\inf J_v
\phantom{\overbrace{0}}    
\\
X_{\ge x}\in v\;&
x\le y&x<\sup I_v&\;x\le\inf J_v
\phantom{\overbrace{0}}    
\end{array}
\end{align*}
(should be read: ``if $\supp(v)=\{y\}$, 
then $X_{<x}\in v$ is equivalent to $y<x$", etc.),
and so
\begin{align}\label{table 2}
\begin{array}{c|ccc}
&\;\supp(v)=\{y\}&\supp(v)=I_v&\supp(v)=J_v
\phantom{\underbrace{|}}
\\
\hline    
\{x:X_{<x}\in v\}\;&
X_{>y}&X_{\ge\sup I_v}&X_{>\inf J_v}
\phantom{\overbrace{0}}    
\\
\{x:X_{\le x}\in v\}\;&
X_{\ge y}&X_{\ge\sup I_v}&X_{>\inf J_v}
\phantom{\overbrace{0}}    
\\
\{x:X_{>x}\in v\}\;&
X_{<y}&X_{<\sup I_v}&X_{\le\inf J_v}
\phantom{\overbrace{0}}    
\\
\{x:X_{\ge x}\in v\}\;&
X_{\le y}&X_{<\sup I_v}&X_{\le\inf J_v}
\phantom{\overbrace{0}}    
\end{array}
\end{align}
(should be read: ``if $\supp(v)=\{y\}$, 
then $\{x:X_{<x}\in v\}$ equals $X_{>y}$", etc.).
Repeating this observation once more, we characterize
$\{x:X_{>x}\in v\}\in u$ as follows:
\begin{align*}
\quad
\begin{array}{l|rrr}
\{x:X_{>x}\in v\}\in u&
\;\supp(v)=\{y\}&\supp(v)=I_v\;&\;\supp(v)=J_v\;
\phantom{\underbrace{|}}
\\
\hline    
\quad\supp(u)=\{x\}&
x<y\quad&x<\sup I_v&\quad x\le\inf J_v
\phantom{\overbrace{0}}    
\\
\quad\supp(u)=I_u&
\sup I_u\le y\quad&\sup I_u\le\sup I_v&\quad\sup I_u\le\inf J_v
\phantom{\overbrace{0}}    
\\
\quad\supp(u)=J_u&
\inf J_u<y\quad&\inf J_u<\sup I_v&\quad\inf J_u<\inf J_v
\phantom{\overbrace{0}}    
\end{array}
\end{align*}
(should be read: ``if $\supp(u)=\{x\}$ and $\supp(v)=\{y\}$, 
then $\{x:X_{>x}\in v\}\in u$ is equivalent to $x<y$", etc.).
And comparing this with~(\ref{ordering supports}), we see that 
$\{x:X_{>x}\in v\}\in u$ holds iff either $\supp(u)<\supp(v)$ 
or $\supp(u)=I_u=\supp(v)=I_v$, as required.

\rule{0em}{0.4em}

Next, we have 
\begin{align*}
u\,{\widetilde\le}\,v
\;\;\lra\;\;
\bigl\{x:\{y:x\le y\}\in v\bigr\}\in u
\;\;\lra\;\;
\bigl\{x:\{y:x<y\,\vee\, x=y\}\in v\bigr\}\in u
\\
\;\;\lra\;\;
\bigl\{x:\{y:x<y\}\in v\bigr\}\in u
\;\vee\:
\bigl\{x:\{y:x=y\}\in v\bigr\}\in u
\\
\;\;\lra\;\;
u\,{\widetilde<}\,v
\;\vee\:
u\,{\widetilde=}\,v.
\qquad\qquad\qquad\qquad\;\:
\end{align*}
And as easy to see, $u\,{\widetilde=}\,v$ 
means $u=v=\widetilde x$ for some~$x$.


\rule{0em}{0.4em}

The relations ${\widetilde>}$ and ${\widetilde\ge}$ are handled dually: 
by definition, $u\,{\widetilde>}\,v$ means $\{x:X_{<x}\in v\}\in u$; 
by~(\ref{table 2}), we get
\begin{align*}
\begin{array}{l|lll}
\{x:X_{<x}\in v\}\in u&
\;\supp(v)=\{y\}&\;\supp(v)=I_v&\;\supp(v)=J_v
\phantom{\underbrace{|}}
\\
\hline    
\quad\supp(u)=\{x\}&
\quad y<x&\sup I_v\le x&\inf J_v<x
\phantom{\overbrace{0}}    
\\
\quad\supp(u)=I_u&
\quad y<\sup I_u&\sup I_v<\sup I_u&\inf J_v<\sup I_u
\phantom{\overbrace{0}}    
\\
\quad\supp(u)=J_u&
\quad y\le\inf J_u&\sup I_v\le\inf J_u&\inf J_v\le\inf J_u
\phantom{\overbrace{0}}    
\end{array}
\end{align*}
and comparing this with~(\ref{ordering supports}), we see that 
$\{x:X_{<x}\in v\}\in u$ holds iff either 
$\supp(v)<\supp(u)$ or $\supp(u)=J_u=\supp(v)=J_v$, 
as required.
\rule{0em}{1em}
And $u\:{\widetilde\ge}\:v$ is equivalent to 
$u\:{\widetilde>}\:v\:\vee\:u\,{\widetilde=}\,v$.
\end{proof}


As easy to see from the established theorem, the relations extending 
linear orders generally have only a~few features of linear orders.

\begin{coro}
Let $X$~be a~linearly ordered set.
\\
1. 
For all non-principal ultrafilters~$u,v$ over~$X$, 
\begin{align*}
&(u\:\widetilde<\:v\,\vee\,u\:\widetilde>\:v)
\:\wedge\:
\neg\,(u\:\widetilde<\:v\,\wedge\,u\:\widetilde>\:v).
\end{align*}
More precisely, 
if\, $u,v$ have distinct supports, then 
\begin{align*}
u\:\widetilde<\:v
&\;\;\lra\;\;
v\:\widetilde>\:u
\;\;\lra\;\;
\neg\,(v\:\widetilde<\:u)
\;\;\lra\;\;
\neg\,(u\:\widetilde>\:v)
\;\;\lra\;\;
\supp(u)<\supp(v),
\end{align*}
and if\, $u,v$ have the same support, then
\begin{align*}
u\:\widetilde<\:v
&\;\;\lra\;\;
\neg\,(u\:\widetilde>\:v)
\;\;\lra\;\;
\supp(u)=\supp(v)=I_u=I_v,
\\
u\:\widetilde>\:v
&\;\;\lra\;\;
\neg\,(u\:\widetilde<\:v)
\;\;\lra\;\;
\supp(u)=\supp(v)=J_u=J_v.
\end{align*}
2. 
The relations $\widetilde<$, $\widetilde\le$, 
$\widetilde>$, $\widetilde\ge$ are transitive, 
but non-antisymmetric, non-connected, and 
neither reflexive nor irreflexive.
\end{coro}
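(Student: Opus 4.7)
The plan is to deduce both parts of the Corollary from the preceding Theorem, the trichotomy of the natural linear order on the set $s(X)$ of supports from Section~2, and Lemma~(part~2), which says that for a non-principal $u$ exactly one of $I_u\in u$, $J_u\in u$ holds; so every non-principal support carries a unique tag ($I$ or $J$) in the sense of the convention preceding table~(\ref{ordering supports}). Since that convention reserves the tags to non-principal ultrafilters, the disjunct $\exists x\,(u=v=\widetilde x)$ in the Theorem's descriptions of $\widetilde\le$ and $\widetilde\ge$ is vacuous once $u$ and $v$ are both non-principal, which already gives the asserted coincidence of $\widetilde<$ with $\widetilde\le$ and of $\widetilde>$ with $\widetilde\ge$ on non-principal ultrafilters.

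For Part~1 I would split on whether $u$ and $v$ share a tagged support. If they do not, trichotomy in $s(X)$ forces exactly one of $\supp(u)<\supp(v)$ and $\supp(u)>\supp(v)$ to hold, while both equal-support disjuncts of the Theorem fail; this directly yields the chain $u\,\widetilde<\,v\lra v\,\widetilde>\,u\lra\neg(v\,\widetilde<\,u)\lra\neg(u\,\widetilde>\,v)\lra\supp(u)<\supp(v)$. If they do share a tagged support, the first disjuncts fail, so $u\,\widetilde<\,v$ collapses to $\supp(u)=\supp(v)=I_u=I_v$ and $u\,\widetilde>\,v$ to $\supp(u)=\supp(v)=J_u=J_v$; by uniqueness of the tag exactly one of these is available, giving the remaining equivalences and, combined with the case split, the opening dichotomy $(u\,\widetilde<\,v\vee u\,\widetilde>\,v)\wedge\neg(u\,\widetilde<\,v\wedge u\,\widetilde>\,v)$.

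For Part~2 I would verify transitivity of $\widetilde<$ by cases on the two Theorem-disjuncts applied to $u\,\widetilde<\,v$ and $v\,\widetilde<\,w$: each of the four combinations either produces a chain $\supp(u)<\supp(v)<\supp(w)$ (whence $\supp(u)<\supp(w)$ by transitivity of $<$ on $s(X)$) or propagates a common tag-$I$ support from $u$ to $w$, so $u\,\widetilde<\,w$ in every case. Transitivity of $\widetilde\le$ adds only the trivial subcase in which two neighbours coincide as the same principal ultrafilter, and $\widetilde>$, $\widetilde\ge$ are dual. For the negative clauses I would produce explicit witnesses from Part~1: two distinct non-principal ultrafilters sharing a tag-$I$ support witness failure of antisymmetry for $\widetilde<$ and $\widetilde\le$; two distinct non-principal ultrafilters sharing a tag-$J$ support witness failure of connectedness for the same pair; a principal $\widetilde x$ satisfies $\neg(\widetilde x\,\widetilde<\,\widetilde x)$ whereas any non-principal $u$ with $\supp(u)=I_u$ satisfies $u\,\widetilde<\,u$, so $\widetilde<$ is neither reflexive nor irreflexive; the other three relations are treated symmetrically. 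No step presents a real obstacle; the slight subtlety is keeping the tag convention straight --- in particular when the underlying sets of two supports coincide but the tags differ, as happens when $X$ has no end-points --- but once that bookkeeping is done each item becomes a one- or two-line verification.
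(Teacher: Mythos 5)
Your proposal is correct and follows essentially the same route as the paper: both parts are read off directly from Theorem~1 together with trichotomy on $s(X)$ and the fact (Lemma, clause~2) that a non-principal support carries exactly one of the tags $I$ or $J$, and the negative clauses of part~2 are witnessed by exactly the configurations you list (two distinct ultrafilters with a common $I$-tagged support for non-antisymmetry, a common $J$-tagged support for non-connectedness, and a principal versus an $I$-tagged non-principal ultrafilter for the failure of both reflexivity and irreflexivity). The only point the paper adds is the caveat that the existence of such witnessing pairs of distinct ultrafilters presupposes a dose of $\AC$ (and, for some of them, that $X$ admits supports of the relevant tag at all) --- worth a sentence in your write-up, but not a gap.
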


\begin{proof} 
1. 
It immediately follows from Theorem~1. 
Alternatively, we can see this without Theorem~1, 
from a~general argument: start from the corresponding 
formula about $<$ and~$>$ and observe that connectives 
commute with ultrafilter quantifiers.

2. 
Transitivity is also immediate by Theorem~1. 
Moreover, by clause~1, we have the following 
description of points of reflexivity and irreflexivity:
\begin{align*}
u\:\widetilde<\:u
&\;\;\lra\;\;
\neg\,(u\:\widetilde>\:u)
\;\;\lra\;\;
\supp(u)=I_u,
\\
u\:\widetilde>\:u
&\;\;\lra\;\;
\neg\,(u\:\widetilde<\:u)
\;\;\lra\;\;
\supp(u)=J_u,
\end{align*} 
and if we pick $u\ne v$, the following equivalences describe
non-antisymmetry:
\begin{align*}
u\:\widetilde<\:v\,\wedge\,v\:\widetilde<\:u
&\;\;\lra\;\;
\supp(u)=\supp(v)=I_u=I_v,
\\
u\:\widetilde>\:v\,\wedge\,v\:\widetilde>\:u
&\;\;\lra\;\;
\supp(u)=\supp(v)=J_u=J_v,
\end{align*}
and non-connectedness:
\begin{align*}
\neg\,(u\:\widetilde<\:v)\,\wedge\,\neg\,(v\:\widetilde<\:u)
&\;\;\lra\;\;
\supp(u)=\supp(v)=J_u=J_v,
\\
\neg\,(u\:\widetilde>\:v)\,\wedge\,\neg\,(v\:\widetilde>\:u)
&\;\;\lra\;\;
\supp(u)=\supp(v)=I_u=I_v.
\end{align*}
Of course, the existence of two distinct ultrafilters $u,v$ 
with any of the required properties assumes a~dose of~$\AC$, 
as even the existence of one such ultrafilter does. 
In some cases (e.g.~if $X$~is well-orderable), the existence 
of one such ultrafilter implies the existence of two ultrafilters.
\end{proof}


Let us emphasize that, although for $u\ne v$ 
the formula $u\:\widetilde<\:v\,\vee\,u\:\widetilde>\:v$ 
looks like connectedness and the formula
$\neg\,(u\:\widetilde<\:v\,\wedge\,u\:\widetilde>\:v)$ 
looks like antisymmetry, 
they actually are not these properties since 
$u\,\widetilde\le\,v$ and $v\,\widetilde\ge\,u$ 
are not the same.
Instructively, this shows that the ultrafilter extension 
of a~relation does not commute with taking of the inverse.

Combining $u\,\widetilde\le\,v$ and $v\,\widetilde\ge\,u$, 
however, we can get a~kind of their ``commutator", which 
behaves closer to a~linear order. 
Define a~relation~$\trianglelefteq$ on ultrafilters by
\begin{align*}
u\trianglelefteq v
\;\;\lra\;\;
u\:\widetilde\le\:v\;\vee\;v\:\widetilde\ge\:u.
\end{align*}
It is clear from the previous that $u\trianglelefteq v$ 
is equivalent to $u\:\widetilde<\:v\:\vee\:
v\:\widetilde>\:u\:\vee\:\exists x\,(u=v=\tilde x)$.
We put also 
\begin{align*}
u\equiv v
\;\;\lra\;\;
u\trianglelefteq v\;\wedge\;v\trianglelefteq u.
\end{align*}

\begin{coro}
For all ultrafilters~$u,v$ over a~linearly ordered set~$X$,
\begin{align*}
u\trianglelefteq v
\;\;\lra\;\;
\supp(u)\le\supp(v),
\\
u\equiv v
\;\;\lra\;\;
\supp(u)=\supp(v).
\end{align*}
Thus $\trianglelefteq$ is a~linear pre-order,\, $\equiv$~is an equivalence, 
and the quotient set $\scc X/\!\equiv$ with the induced linear order is 
isomorphic to the set~$s(X)$ of supports with their natural ordering.
\end{coro}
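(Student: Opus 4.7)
The plan is to reduce everything to Theorem~1 by unpacking the definition of $\trianglelefteq$ and then matching the resulting disjunction against the table~(\ref{ordering supports}) used to define $\le$ on supports.

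First, I would expand $u\trianglelefteq v$ as $u\,\widetilde\le\,v \,\vee\, v\,\widetilde\ge\,u$ and replace each disjunct by its characterization from Theorem~1. This gives
\begin{align*}
u\,\widetilde\le\,v &\;\;\lra\;\; \supp(u)<\supp(v)\;\vee\;\supp(u)=\supp(v)=I_u=I_v\;\vee\;\exists x\,(u=v=\widetilde x), \\
v\,\widetilde\ge\,u &\;\;\lra\;\; \supp(v)>\supp(u)\;\vee\;\supp(u)=\supp(v)=J_u=J_v\;\vee\;\exists x\,(u=v=\widetilde x).
\end{align*}
Taking the disjunction, the two strict inequalities merge into $\supp(u)<\supp(v)$, and the remaining disjuncts cover precisely the three mutually exclusive ways $\supp(u)$ and $\supp(v)$ can coincide (both principal at the same point; both non-principal with the same initial-segment support; both non-principal with the same final-segment support). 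This is exactly $\supp(u)\le\supp(v)$, so the first equivalence is established.

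For the second equivalence, I would simply combine $u\trianglelefteq v$ and $v\trianglelefteq u$ via the first equivalence: $\supp(u)\le\supp(v)$ and $\supp(v)\le\supp(u)$ together give $\supp(u)=\supp(v)$, since $\le$ on $s(X)$ is a genuine linear order (antisymmetric). The converse is immediate from the fact that $\supp(u)=\supp(v)$ implies $\supp(u)\le\supp(v)$ and $\supp(v)\le\supp(u)$.

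The concluding structural assertions are now automatic. Since $\le$ on $s(X)$ is a linear order, its pullback along the map $u\mapsto\supp(u)$ is a linear pre-order, which is $\trianglelefteq$; its symmetric part coincides with $\equiv$, hence $\equiv$ is an equivalence. The map $[u]_\equiv\mapsto\supp(u)$ is well-defined and injective by the second equivalence, surjective because every half-cut or singleton of the stipulated form is the support of some ultrafilter (as noted after the definition of $\supp$), and order-preserving in both directions by the first equivalence; so $\scc X/\!\equiv\,\cong\,s(X)$ as linearly ordered sets. The only subtle point is keeping in mind the refined reading of ``$\supp(u)=I_u$'' versus ``$\supp(u)=J_u$'' in the end-point-free case, so that the three ways of having equal supports are genuinely disjoint and the surjectivity onto $s(X)$ is correct; this is the one place where carelessness could collapse distinct equivalence classes.
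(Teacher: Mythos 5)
Your proposal is correct and follows essentially the same route as the paper: expand $u\trianglelefteq v$ into $u\,\widetilde\le\,v\vee v\,\widetilde\ge\,u$, substitute the characterizations from Theorem~1, merge the disjuncts into $\supp(u)\le\supp(v)$, and read off the quotient isomorphism from the fact that $\equiv$-classes are exactly the fibers of $\supp$. Your extra remark about the refined reading of $\supp(u)=I_u$ versus $\supp(u)=J_u$ in the end-point-free case is a worthwhile precaution the paper leaves implicit.
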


\begin{proof} 
By Theorem~1, we have
\begin{align*}
u\trianglelefteq v
&\;\;\lra\;\;
u\:\widetilde\le\:v\;\vee\;v\:\widetilde\ge\:u
\\
&\;\;\lra\;\;
\supp(u)<\supp(v)
\;\vee\;\supp(u)=\supp(v)=I_u=I_v
\\
&\phantom{\;\;\lra\;\;
\supp(u)<\supp(v)}
\;\vee\;\supp(u)=\supp(v)=J_u=J_v
\;\vee\;\exists x\,(u=v=\tilde x)
\\
&\;\;\lra\;\;
\supp(u)\le\supp(v),
\end{align*}
as required.
The equivalence class $\{v:v\equiv u\}$ of~$u$ is 
hence $\{v:\supp(v)=\supp(u)\}$, and the claim follows.
\end{proof}

\begin{coro}
If $\le$~is a~well-order, then $\widetilde\le$~coincides
with~$\trianglelefteq$ and is a~pre-well-order.
\end{coro}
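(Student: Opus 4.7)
The plan is to combine the earlier Example, which says that for a well-ordered $X$ every non-principal $u\in\scc X$ has $\supp(u)=I_u$, with Theorem~1 and Corollary~2.

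First I would observe that in a well-ordered set every nonempty final segment has a least element, so the condition in Lemma~1(4) requiring $J_u$ to lack a least element for non-principal $u$ is never met. Consequently no support is of the form $J_u$ with $u$ non-principal; every support is either a singleton or an initial segment $I_u$.

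Next I would establish $\widetilde\le\,=\,\trianglelefteq$. The inclusion $\widetilde\le\,\subseteq\,\trianglelefteq$ is immediate from the definition of $\trianglelefteq$. Conversely, let $u\trianglelefteq v$; by Corollary~2 this means $\supp(u)\le\supp(v)$. If $\supp(u)<\supp(v)$, Theorem~1 gives $u\,\widetilde<\,v$, hence $u\,\widetilde\le\,v$. If $\supp(u)=\supp(v)$, then by the previous paragraph both supports are either a common singleton (forcing $u=v=\widetilde x$) or both equal $I_u=I_v$; either way Theorem~1 yields $u\,\widetilde\le\,v$. The third alternative $\supp(u)=\supp(v)=J_u=J_v$, which in general would contribute to $\trianglelefteq$ without contributing to $\widetilde\le$, is precisely what is ruled out by well-ordering.

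Finally, for the pre-well-order claim, Corollary~2 already guarantees that $\trianglelefteq$ is a linear pre-order whose quotient is isomorphic to $s(X)$, so it remains to verify $s(X)$ is well-ordered. Each element of $s(X)$ is either a point $x\in X$ or an initial segment $I$ of $X$ without greatest element, uniquely determined by a limit point $\alpha=\sup I$ in the Dedekind completion; a short check against the table defining $<$ on supports shows that the obvious identification of $s(X)$ with a set of ordinals (interleaving each $I_\alpha$ just before $\{\alpha\}$ when both exist) is order-preserving. Since any set of ordinals is well-ordered, so is $s(X)$, and hence $\trianglelefteq$ is a pre-well-order. I expect no serious obstacle; the only conceptually active point is the disappearance of $J$-type supports under well-ordering, which simultaneously collapses $\widetilde\le$ onto $\trianglelefteq$ and forces $s(X)$ to be well-ordered.
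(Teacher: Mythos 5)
Your proposal is correct and follows essentially the same route as the paper: the key observation that well-ordering eliminates $J$-type supports (so $\supp(u)=I_u$ for every non-principal $u$), after which Theorem~1 and Corollary~2 identify $\widetilde\le$ with $\trianglelefteq$. You merely spell out two points the paper leaves implicit, namely the justification via Lemma~1(4) and the verification that $s(X)$ is well-ordered.
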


\begin{proof}
As noted above, for all non-principal ultrafilters~$u$ over a~well-ordered set~$X$,
$\supp(u)=I_u$. Hence, $u\:\widetilde\le\:v$ is equivalent to $\supp(u)\le\supp(v)$ 
by Theorem~1 and thus to $u\trianglelefteq v$ by Corollary~2.
\end{proof}


\subsection*{4. Ultrafilter extensions of operations min and max}

Here we describe the ultrafilter extensions of the minimum and maximum operations 
on a~given linearly ordered set. Firstly we do this in terms of the extensions of 
the order and the converse order.

\begin{thm}
If $X$~is a~linearly ordered set and $u,v$~are ultrafilters over~$X$, then
\begin{align*}
\widetilde\min(u,v)=u
\;\;\lra\;\;
&\widetilde\max(u,v)=v
\;\;\lra\;\;
u\:{\widetilde\le}\:v\:\vee\:u=v,
\\
\rule{0em}{1.2em}
\widetilde\min(u,v)=v
\;\;\lra\;\;
&\widetilde\max(u,v)=u
\;\;\lra\;\;
u\:{\widetilde\ge}\:v\:\vee\:u=v.
\end{align*}
\end{thm}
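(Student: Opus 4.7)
My strategy is to unfold the definition of $\widetilde\min$ and $\widetilde\max$, reduce membership to a condition on supports, and match the outcome to the characterization of $\widetilde\le$ and $\widetilde\ge$ in Theorem~1. The key algebraic step is the identity
\begin{align*}
\{y : \min(x,y) \in S\} =
\begin{cases}
X_{\ge x} \cup (S \cap X_{<x}) & \text{if } x \in S, \\
S \cap X_{<x} & \text{if } x \notin S,
\end{cases}
\end{align*}
which (since the two pieces are disjoint) shows that $\{y : \min(x,y) \in S\} \in v$ iff either $x \in S$ and $X_{\ge x} \in v$, or $S \cap X_{<x} \in v$. A parallel identity for $\max$ uses $X_{\le x}$ and $X_{>x}$ in place of $X_{\ge x}$ and $X_{<x}$.

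I would prove the four equivalences one at a time against the right-hand sides $u \:\widetilde\le\: v \vee u = v$ and $u \:\widetilde\ge\: v \vee u = v$. For $\widetilde\min(u,v) = u \;\lra\; u \:\widetilde\le\: v \vee u = v$, sufficiency is a case analysis on support types from Theorem~1: when $u = v$, Lemma~1 parts 3--4 provide, for each $S \in u$, a set in $u$ of $x$-witnesses making $\{y : \min(x,y) \in S\} \in u$; when $\supp(u) < \supp(v)$ with $u \neq v$, the final segment $X_{\ge x}$ contains the support region of $v$ and thus lies in $v$ for $x$ in a set in $u$, which forces $S \in u \Rightarrow S \in \widetilde\min(u,v)$ via the identity; when $\supp(u) = \supp(v) = I_u = I_v$, the same argument applies because $X_{\ge x} \cap I_v$ is a nonempty final segment of $I_v$, hence in $v$, for $x$ in a cofinal subset of $I_u$. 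Necessity is the contrapositive: if $u \neq v$ and $u \:\widetilde\le\: v$ fails, Theorem~1 (combined with Corollary~1) leaves only $\supp(u) > \supp(v)$ or $\supp(u) = \supp(v) = J_u = J_v$, and in both subcases the same formulaic reduction shows that $X_{\ge x} \notin v$ throughout a set in $u$, so the defining condition computes $\widetilde\min(u,v)$ to equal $v$ rather than $u$, and any $S \in u \setminus v$ serves as a witness.

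The equivalence $\widetilde\max(u,v) = v \;\lra\; u \:\widetilde\le\: v \vee u = v$ is proved by the symmetric argument using the $\max$-identity, and then $\widetilde\min(u,v) = u \;\lra\; \widetilde\max(u,v) = v$ follows by transitivity. The second line of the theorem is obtained by applying the already-proved equivalences to the reversed order $(X, \ge)$, under which $\min$ and $\max$ swap and $\widetilde\le$ and $\widetilde\ge$ swap while the ultrafilter structure is unchanged. The main obstacle lies in the subcases where $u$ and $v$ share a non-singleton support: the type $I$ vs.~$J$ of the common support determines whether $\widetilde\min(u,v)$ equals $u$ or $v$, and verifying this requires careful tracking of which of $X_{\ge x}$ and $X_{<x}$ belongs to $v$ as $x$ ranges over a support-witnessing set from $u$---this is precisely where the asymmetry and non-commutativity of $\widetilde\min$ become most delicate.
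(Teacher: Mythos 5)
Your proof is correct, and its opening move is exactly the paper's: decompose $\min(x,y)\in S$ according to whether $x\le y$ or $x\ge y$ (your displayed identity is the set-level form of the paper's rewriting $\min(x,y)\in S\lra(x\le y\wedge x\in S)\vee(x\ge y\wedge y\in S)$). Where you diverge is in what you do with it. The paper pushes the decomposition one step further through the outer ultrafilter quantifier, using that connectives commute with ultrafilter quantifiers, to obtain the single clean equivalence $S\in\widetilde\min(u,v)\lra(u\:\widetilde\le\:v\wedge S\in u)\vee(u\:\widetilde\ge\:v\wedge S\in v)$; from this the whole theorem drops out at once, with supports entering only in the final remark that every $u$ satisfies $u\:\widetilde\le\:u$ or $u\:\widetilde\ge\:u$. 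You instead descend to the support characterization of Theorem~1 and run an exhaustive case analysis ($u=v$; $\supp(u)<\supp(v)$; common $I$-type support; the contrapositive cases), plus a duality argument for the second line. Your route is sound but substantially heavier, and it makes Theorem~2 depend on the full force of Theorem~1, whereas the paper's argument is an essentially formal reduction of $\widetilde\min$ to $\widetilde\le$ and $\widetilde\ge$ that needs Theorem~1 (via Corollary~1) only to know that $u\:\widetilde\le\:v\vee u\:\widetilde\ge\:v$ always holds. If you complete the formal step from your identity to the displayed equivalence above, all of your case distinctions become unnecessary.
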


\begin{proof} 
We have, for all $S\subseteq X$,
\begin{align*}
S\in\widetilde\min(u,v)
\;\;\lra\;\;
\bigl\{x:\{y:\min(x,y)\in S\}\in v\bigr\}\in u
\qquad\qquad\qquad\qquad\;\;\;
\\
\;\;\lra\;\;\,
\bigl\{x:\{y:
(x\le y\,\wedge\,x\in S)
\,\vee\,
(x\ge y\,\wedge\,y\in S)
\}\in v\bigr\}\in u
\qquad\qquad\;\:
\\
\;\;\lra\;\;
\bigl(\bigl\{x:\{y:x\le y\}\in v\bigr\}\in u
\,\wedge\,S\in u\bigr)
\:\vee\:
\bigl(\bigl\{x:\{y:x\ge y\}\in v\bigr\}\in u
\,\wedge\,S\in v\bigr)\,\:
\\
\;\;\lra\;\;
\bigl(u\:{\widetilde\le}\,v\,\wedge\,S\in u\bigr)
\:\vee\,
\bigl(u\:{\widetilde\ge}\,v\,\wedge\,S\in v\bigr).
\qquad\qquad\qquad\qquad\;\;\;
\end{align*}
Therefore, 
\begin{align*}
\widetilde\min(u,v)=
\left\{
\begin{array}{cl}
u&\text{if }\;
u\:{\widetilde\le}\:v,
\\
\rule{0em}{1.2em}
v&\text{if }\;
u\:{\widetilde\ge}\:v.
\end{array}
\right.
\end{align*}
The dual argument gives
\begin{align*}
\widetilde\max(u,v)=
\left\{
\begin{array}{cl}
u&\text{if }\;
u\:{\widetilde\ge}\:v,
\\
\rule{0em}{1.2em}
v&\text{if }\;
u\:{\widetilde\le}\:v.
\end{array}
\right.
\end{align*}
Recalling now that for any~$u\in\scc X$ either 
$u\:{\widetilde\le}\:u$ or $u\:{\widetilde\ge}\:u$
(if $u$~is non-principal, this depends on what of $I_u$ or~$J_u$ 
is the support of~$u$), we complete the proof.
\end{proof}

Now we are able to describe $\widetilde\min$ and $\widetilde\max$ 
in terms of supports.

\begin{coro}
If $X$~is a~linearly ordered set and $u,v$~are ultrafilters over~$X$, 
then
\begin{align*}
\widetilde\min(u,v)=u
&\;\;\lra\;\;
\widetilde\max(u,v)=v
\\
&\;\;\lra\;\;
\supp(u)<\supp(v)\:\vee\:\supp(u)=\supp(v)=I_u=I_v\:\vee\:u=v,
\\
\rule{0em}{1.2em}
\widetilde\min(u,v)=v
&\;\;\lra\;\;
\widetilde\max(u,v)=u
\\
&\;\;\lra\;\;
\supp(v)<\supp(u)\:\vee\:\supp(u)=\supp(v)=J_u=J_v\:\vee\:u=v.
\end{align*}
\end{coro}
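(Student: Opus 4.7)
The plan is to derive this Corollary as a direct substitution, combining Theorem~2 (which has just been proved) with Theorem~1. Theorem~2 already gives us the four biconditionals
\begin{align*}
\widetilde\min(u,v)=u\;\lra\;\widetilde\max(u,v)=v\;\lra\;u\,\widetilde\le\,v\,\vee\,u=v,
\\
\widetilde\min(u,v)=v\;\lra\;\widetilde\max(u,v)=u\;\lra\;u\,\widetilde\ge\,v\,\vee\,u=v,
\end{align*}
so the only thing left to do is translate the relations $u\,\widetilde\le\,v$ and $u\,\widetilde\ge\,v$ into the language of supports, using Theorem~1.

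First I would handle the $\widetilde\min(u,v)=u$ clause. By Theorem~1,
\[
u\,\widetilde\le\,v \;\lra\; \supp(u)<\supp(v) \,\vee\, \supp(u)=\supp(v)=I_u=I_v \,\vee\, \exists x\,(u=v=\widetilde x),
\]
so substituting into the equivalence from Theorem~2 yields a four-way disjunction. The third disjunct $\exists x\,(u=v=\widetilde x)$ clearly implies the fourth disjunct $u=v$ and is therefore absorbed by it, leaving precisely the right-hand side stated in the Corollary. The dual argument, using the $\widetilde\ge$ clause of Theorem~1, handles the case $\widetilde\min(u,v)=v$ identically, with $I_u=I_v$ replaced by $J_u=J_v$.

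There is no real obstacle here: the argument is a bookkeeping step that merges a $u=v$ disjunct coming from Theorem~2 with the $\exists x\,(u=v=\widetilde x)$ disjunct coming from Theorem~1. The only thing to check carefully is that no case of $u=v$ falls through the cracks, i.e.\ that when $u=v$ we indeed have $\widetilde\min(u,v)=u=v$ and $\widetilde\max(u,v)=u=v$; but this is immediate from the last sentence of the proof of Theorem~2, where it was noted that every $u\in\scc X$ satisfies $u\,\widetilde\le\,u$ or $u\,\widetilde\ge\,u$.
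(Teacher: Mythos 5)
Your proposal is correct and is exactly the argument the paper intends: the paper's own proof consists of the single line ``Theorems~1 and~2,'' and your substitution of Theorem~1's support-characterizations of $\widetilde\le$ and $\widetilde\ge$ into Theorem~2's biconditionals, with the $\exists x\,(u=v=\widetilde x)$ disjunct absorbed into $u=v$, is the intended bookkeeping.
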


\begin{proof}
Theorems~1 and~2. 
\end{proof}

\begin{ex}
If $X$ is $\omega$ with the natural ordering, we get
$\widetilde\max(u,v)=v$ if $v$~is non-principal,
and
$\widetilde\max(u,v)=u$ if $v$~is principal and $u$~non-principal.
This was noted in~\cite{Hindman Strauss}, Exercise~4.1.11.
\end{ex}


Turning to algebraic properties of $\widetilde\min$ 
and~$\widetilde\max$, we recall some facts about skew algebras.
$(X,\,\cdot\,)$ is a~{\it skew semilattice\/}, or shorter, 
a~{\it band\/}, iff $\,\cdot\,$~is associative and idempotent, 
and a~{\it semilattice\/} iff it is moreover commutative. 
A~band is {\it rectangular\/}, or {\it nowhere commutative\/}, 
iff it satisfies $xyx=x$, or equivalently, $xy\ne yx\vee x=y$.
Bands satisfying the stronger condition $xy=x\,\vee\,xy=y$ are 
sometimes called {\it quasi-trivial\/}, see e.g.~\cite{Kepka};
they are easily characterized as groupoids (i.e.~algebras with one 
binary operation) in which each non-empty subset forms a~subgroupoid.
A~complete description of all varieties of bands can be found in 
any of \cite{Biryukov}--\cite{Gerhard}; for more on various special 
classes of semigroups see e.g.~\cite{Nagy}. The congruence~$D$ 
on a~band~$X$ is defined by letting, for all $x,y\in X$, 
$$x\,D\,y\,\;\;\lra\;\;\,xyx=x\:\wedge\:yxy=y.$$ 
The quotient $X/D$ of a~band~$X$ is a~semilattice and $D$-equivalence 
classes are rectangular subbands of~$X$; moreover, $X/D$~is the largest 
semilattice quotient of~$X$ (i.e.~any homomorphism of~$X$ into any 
semilattice~$Y$ is decomposed into the canonical homomorphism of $X$ 
onto~$X/D$ and a~homomorphism of~$X/D$ into~$Y$) and the $D$-equivalence 
class of each $x\in X$ is the largest rectangular subband containing~$x$.

$(X,+,\,\cdot\,)$ is a~{\it skew lattice\/} iff both $(X,+)$ and 
$(X,\,\cdot\,)$ are bands and the following absorption laws hold:
\begin{align*}
x(x+y)&=x+xy=x,
\\
(x+y)y&=xy+y=y.
\end{align*}
A~commutative skew lattice is a~{\it lattice\/}. A~skew lattice is
{\it rectangular\/} iff both its bands are rectangular and dualize 
each other: $x+y=yx$. In a~skew lattice~$X$, the congruences~$D$ for $+$ 
and~$\,\cdot\,$ coincide, $X/D$~is the largest lattice quotient of~$X$, 
and $D$-equivalence classes are maximal rectangular skew lattices.
A~skew lattice is {\it distributive\/} iff each of its operations 
is left and right distributive w.r.t.~another one:
\begin{align*}
x(y+z)&=xy+xz,
\qquad
x+yz=(x+y)(x+z),
\\
(x+y)z&=xz+yz,
\qquad
xy+z=(x+z)(y+z).
\end{align*}
(Note that distributivity implies ``a~half" of the absorption identitiess 
above.) If a~skew lattice~$X$ is distributive, so is the lattice~$X/D$.
We point out that skew lattices, introduced (with slightly different 
absorption laws) in~\cite{Jordan}, were intensively studied in past 
decades, see e.g.~\cite{Leech 89,Leech 96}.


Our following result shows that the ultrafilter extensions of 
linearly ordered sets with its minimum and maximum operations 
provide natural instances of skew lattices.


\begin{coro}
The operations $\widetilde\min$ and\, $\widetilde\max$ are associative, 
idempotent and even quasi-trivial, non-commutative, and distributive 
w.r.t.~each other. Therefore, $(\scc X,\widetilde\min,\widetilde\max)$ is 
a~distributive skew lattice.
\end{coro}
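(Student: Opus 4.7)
The plan is to derive every stated property directly from Theorem~2 (which yields $\widetilde\min(u,v),\widetilde\max(u,v)\in\{u,v\}$ together with the biconditional $\widetilde\min(u,v)=u\,\lra\,\widetilde\max(u,v)=v$) and, where finer bookkeeping is required, from the support-based rewriting in Corollary~3. Under this reading, quasi-triviality is Theorem~2 read literally; idempotence follows by specialising to $v=u$, since then $u\:{\widetilde\le}\:u\,\vee\,u=u$ is a tautology; and non-commutativity is witnessed by any two distinct ultrafilters sharing a support, e.g.\ any two distinct non-principal $u,v$ over~$\omega$ (both with support $\omega=I_u=I_v$), for which Corollary~3 gives $\widetilde\min(u,v)=u\ne v=\widetilde\min(v,u)$.

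The absorption laws fall out of Theorem~2 with no case analysis on supports at all. To verify $\widetilde\min(u,\widetilde\max(u,v))=u$: if $\widetilde\max(u,v)=u$ the result is $\widetilde\min(u,u)=u$; if $\widetilde\max(u,v)=v$ then the biconditional gives $\widetilde\min(u,v)=u$, so the result is again $u$. The three companion identities $\widetilde\max(u,\widetilde\min(u,v))=u$, $\widetilde\min(\widetilde\max(u,v),v)=v$ and $\widetilde\max(\widetilde\min(u,v),v)=v$ are handled the same way, each time reducing to a single invocation of the biconditional in Theorem~2.

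For associativity of $\widetilde\min$ (that of $\widetilde\max$ is dual), I rewrite Corollary~3 in the following compact form: attach to each $u$ its \emph{level} $\ell(u)\in s(X)$ together with a \emph{side} (principal, $I$-side, or $J$-side); then $\widetilde\min(u,v)$ is the argument of strictly smaller level, with ties broken by taking the left argument on the $I$-side and the right argument on the $J$-side. Verifying $\widetilde\min(\widetilde\min(u,v),w)=\widetilde\min(u,\widetilde\min(v,w))$ then amounts to a short case split on the relative order of $\ell(u),\ell(v),\ell(w)$ in the chain $s(X)$: when the three levels are pairwise distinct, both sides select the element of least level; when two or more of them coincide, the computation collapses to a single $\equiv$-class (Corollary~2), on which $\widetilde\min$ is either the left-zero or the right-zero band, each trivially associative.

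The main obstacle is distributivity, e.g.\ $\widetilde\min(u,\widetilde\max(v,w))=\widetilde\max(\widetilde\min(u,v),\widetilde\min(u,w))$. The approach is the same level--side bookkeeping. When $\ell(u),\ell(v),\ell(w)$ are pairwise distinct the identity reduces to ordinary $\min$--$\max$ distributivity in the chain $s(X)$, and both sides coincide with the appropriate element of $\{u,v,w\}$. When two or more levels agree, the verification localises to the position of $u$ relative to the (at most one) $\equiv$-class containing $v,w$, on which $\widetilde\min$ and $\widetilde\max$ are opposite projections ($\widetilde\min$ left and $\widetilde\max$ right on the $I$-side, and dually on the $J$-side); this makes all four distributivity identities routine. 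With the band axioms, the absorption laws and the four distributivity laws in hand, $(\scc X,\widetilde\min,\widetilde\max)$ is a distributive skew lattice by definition.
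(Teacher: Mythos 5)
Your treatment of quasi-triviality, idempotence, non-commutativity and the four absorption laws matches the paper's and is fine. Your associativity argument is a genuinely different route: the paper simply invokes the general preservation of associativity under ultrafilter extensions (citing Hindman--Strauss), whereas you verify it by hand from the support description; your phrase ``collapses to a single $\equiv$-class'' only literally covers the case of three equal levels, but the mixed cases do check out, so this part can be completed. The genuine gap is in the distributivity step. Your case split considers only the position of $u$ relative to ``the $\equiv$-class containing $v,w$'', i.e.\ it assumes that when levels coincide it is the two \emph{inner} arguments that coincide. The dangerous configuration is the one where the \emph{outer} argument shares its $\equiv$-class with exactly one inner argument while the third argument sits at a different level: there the two sides of a distributive identity end up evaluating $\widetilde\min$ or $\widetilde\max$ on the same pair of ultrafilters but in \emph{opposite orders}, and since each $\equiv$-class is a rectangular (left- or right-zero) band, the order of the arguments changes the answer.

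In fact the identity fails there, so this step cannot be repaired. Take $X=\omega$, let $u\ne v$ be non-principal (so $\supp(u)=\supp(v)=I_u=I_v=\omega$, and on this class $\widetilde\min$ is the left and $\widetilde\max$ the right projection), and let $w=\widetilde0$. Then
\begin{align*}
\widetilde\max\bigl(u,\widetilde\min(v,w)\bigr)
&=\widetilde\max(u,\widetilde0)=u,
\\
\widetilde\min\bigl(\widetilde\max(u,v),\widetilde\max(u,w)\bigr)
&=\widetilde\min(v,u)=v,
\end{align*}
so the law $x+yz=(x+y)(x+z)$ (with $+=\widetilde\max$, $\cdot=\widetilde\min$) fails, and the full two-sided distributivity asserted in the Corollary is false as stated; at best a weakened (one-sided or $D$-localized) version survives. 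You are in good company: the paper's own proof checks exactly this identity, declares case (iv) ($u\:\widetilde\le\:v$, $u\:\widetilde\ge\:w$, $v\:\widetilde\ge\:w$ --- precisely the configuration above) unproblematic, and in doing so tacitly replaces $\widetilde\min(v,u)$ by $\widetilde\min(u,v)$, i.e.\ infers $v\:\widetilde\ge\:u$ from $u\:\widetilde\le\:v$ --- the very inference the paper warns is invalid because the extension does not commute with taking the converse. Any correct case analysis here must track the orientation of every pair, not merely which of $\widetilde\le,\widetilde\ge$ holds in one orientation; once you do, you will find the counterexample rather than a proof.
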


\begin{proof}
As well-known, associativity is stable under ultrafilter
extensions (see~\cite{Hindman Strauss}), so the operations 
$\widetilde\min$ and $\widetilde\max$ are associative. 
On the other hand, it can be shown that neither commutativity, 
nor idempotency, nor distributivity is not stable (see~\cite{Saveliev2}).

It is clear from Theorem~2 that $\widetilde\min$ and $\widetilde\max$
are indeed non-commutative on distinct non-principal ultrafilters with 
the same support. Indeed, if $\supp(u)=\supp(v)=I_u=I_v$, then 
$\widetilde\min(u,v)=\widetilde\max(v,u)=u$ and 
$\widetilde\min(v,u)=\widetilde\max(v,u)=v$, and similarly for the dual case.
So we get the following description of points of non-commutativity:
\begin{align}\label{non-commutativity}
\widetilde\min(u,v)\ne\widetilde\min(v,u)
\;\;\lra\;\;
\widetilde\max(u,v)\ne\widetilde\max(v,u)
\;\;\lra\;\;
u\ne v\;\wedge\;\supp(u)=\supp(v).
\end{align}

It is evident from Theorem~2 also that $\widetilde\min$ and $\widetilde\max$ 
are idempotent. But this can be seen without Theorem~2 from a~general fact: 
each of $\min$ and $\max$ satisfies quasi-triviality, which is obviously 
stronger than idempotency and is stable under the ultrafilter extension
(see~\cite{Saveliev2}).

Next, distributivity is verified by a~direct calculation. 
E.g.~check the identity
\begin{align}\label{distributivity}
\widetilde\max\bigl(u,\widetilde\min(v,w)\bigr)=
\widetilde\min\bigl((\widetilde\max(u,v),\widetilde\max(u,w)\bigr).
\end{align}
Recall that we have either $u\:\widetilde\le\:v$ 
or $u\:\widetilde\ge\:v$, but if one of $u,v$ is non-principal, 
not both (Corollary~1). 
Hence, for any $u,v,w$ we have exactly 8~conjunctions 
of possible relationships between each pair of them:
\begin{align*}
\text{(i)}\;\;
u\:\widetilde\le\:v
\;\wedge\;
u\:\widetilde\le\:w
\;\wedge\;
v\:\widetilde\le\:w,
&\qquad
\,\,\,\text{(v)}\;\;
u\:\widetilde\ge\:v
\;\wedge\;
u\:\widetilde\le\:w
\;\wedge\;
v\:\widetilde\le\:w,
\\
\text{(ii)}\;\;
u\:\widetilde\le\:v
\;\wedge\;
u\:\widetilde\le\:w
\;\wedge\;
v\:\widetilde\ge\:w,
&\qquad
\,\,\text{(vi)}\;\;
u\:\widetilde\ge\:v
\;\wedge\;
u\:\widetilde\le\:w
\;\wedge\;
v\:\widetilde\ge\:w,
\\
\text{(iii)}\;\;
u\:\widetilde\le\:v
\;\wedge\;
u\:\widetilde\ge\:w
\;\wedge\;
v\:\widetilde\le\:w,
&\qquad
\,\text{(vii)}\;\;
u\:\widetilde\ge\:v
\;\wedge\;
u\:\widetilde\ge\:w
\;\wedge\;
v\:\widetilde\le\:w,
\\
\text{(iv)}\;\;
u\:\widetilde\le\:v
\;\wedge\;
u\:\widetilde\ge\:w
\;\wedge\;
v\:\widetilde\ge\:w,
&\qquad
\text{(viii)}\;\;
u\:\widetilde\ge\:v
\;\wedge\;
u\:\widetilde\ge\:w
\;\wedge\;
v\:\widetilde\ge\:w.
\end{align*}
It is immediate from Theorem~2 that the identity~(\ref{distributivity}) 
holds in all of these cases except for cases (iii) and~(vi), where it 
may appear that it fails.
However, these two cases are in fact degenerate because 
of transitivity of $\widetilde\le$ and~$\widetilde\ge$ 
(Corollary~1). E.g.~in case~(iii),
$u\:\widetilde\le\:v\:\widetilde\le\:w$ 
gives $u\:\widetilde\le\:w$, which together with 
$u\:\widetilde\ge\:w$ gives $u=w=\tilde x$ for some~$x$, whence 
it follows $u=v=w$, which of course gives the required identity.

Finally, to handle absorption let check e.g.~that 
$$\widetilde\min\bigl(u,\widetilde\max(u,v)\bigr)=u.$$
But this easily follows from Theorem~2:
if $\widetilde\max(u,v)=u$ then the left term $\widetilde\min(u,u)$ equals~$u$
by idempotency, while if $\widetilde\max(u,v)=v$ then the left term 
$\widetilde\min(u,v)$ equals~$u$ because $\widetilde\min(u,v)=u$ is equivalent 
to $\widetilde\max(u,v)=v$.
\end{proof}


Thus, among the obvious features of the operations $\min$ and~$\max$, 
only commutativity fails under ultrafilter extensions. Modulo the 
equivalence~$\equiv$, however, the operations $\widetilde\min$ and 
$\widetilde\max$ become commutative and actually the corresponding 
minimum and maximum.

\begin{coro}
Let $X$~be a~linearly ordered set.
\\
1.
For all $u,v\in\scc X$,
\begin{align*}
u\trianglelefteq v
&\;\;\lra\;\;
\,\widetilde\min(u,v)=u\:\vee\:\widetilde\min(v,u)=u\,
\;\;\lra\;\;
\,\widetilde\min(u,v)=u\:\vee\:
\widetilde\min(u,v)\ne\widetilde\min(v,u)\,
\\
&\;\;\lra\;\;
\widetilde\max(u,v)=v\:\vee\:\widetilde\max(v,u)=v
\;\;\lra\;\;
\widetilde\max(u,v)=v\:\vee\:
\widetilde\max(u,v)\ne\widetilde\max(v,u),
\end{align*}
and
\begin{align*}
u\equiv v
\;\;\lra\;\;
\widetilde\min(u,v)\ne\widetilde\min(v,u)\:\vee\:u=v
\;\;\lra\;\;
\widetilde\max(u,v)\ne\widetilde\max(v,u)\:\vee\:u=v.
\end{align*}
2. 
The equivalence~$\equiv$ is a~congruence of the skew lattice
$(\scc X,\widetilde\min,\widetilde\max)$ and actually coincides 
with~$D$. 
\\
3. 
The quotient $\scc X/\!\equiv$ with the operations induced by 
$\widetilde\min$ and $\widetilde\max$ is isomorphic to the lattice $s(X)$ 
with its minimum and maximum operations.
\\
4. 
The $\equiv$-equivalence class of each $u\in\scc X$ is either 
a~left-zero band for~$\widetilde\min$ and a~right-zero band for~$\widetilde\max$,
or conversely,
a~right-zero band for~$\widetilde\min$ and a~left-zero band for~$\widetilde\max$.
\end{coro}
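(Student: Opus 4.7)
The plan is to read everything off Theorem~2, Corollary~2, and Corollary~4, together with the non-commutativity formula~(\ref{non-commutativity}) established in the proof of Corollary~5. I would first record the support-preservation identities
$$\supp(\widetilde\min(u,v))=\min(\supp(u),\supp(v)),\qquad\supp(\widetilde\max(u,v))=\max(\supp(u),\supp(v)),$$
which are immediate from Corollary~4: in each case listed there, the value $\widetilde\min(u,v)$ is the one of $u,v$ whose support is the smaller (ties of support are irrelevant since both candidates then share it).

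For part~1, Theorem~2 rewrites $\widetilde\min(u,v)=u$ as $u\,\widetilde\le\,v\vee u=v$ and $\widetilde\min(v,u)=u$ as $v\,\widetilde\ge\,u\vee u=v$, so their disjunction is exactly $u\trianglelefteq v$. The passage to $\widetilde\min(u,v)\ne\widetilde\min(v,u)$ as an alternative second disjunct is justified by~(\ref{non-commutativity}), which says non-commutativity of $\widetilde\min$ on a pair is equivalent to $u\ne v\wedge\supp(u)=\supp(v)$, a condition which certainly implies $u\trianglelefteq v$. The $\widetilde\max$ equivalences are dual, and the $\equiv$ line collapses by Corollary~2 and the same instance of~(\ref{non-commutativity}).

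For part~2, congruence is an immediate consequence of the support-preservation identity: if $u_1\equiv u_2$ and $v_1\equiv v_2$, then $\supp(\widetilde\min(u_1,v_1))=\min(\supp(u_1),\supp(v_1))=\min(\supp(u_2),\supp(v_2))=\supp(\widetilde\min(u_2,v_2))$, and likewise for $\widetilde\max$. The more delicate half, $\equiv=D$, is the step I expect to require the most care. Here I would argue as follows. If $\supp(u)<\supp(v)$, Corollary~4 forces $\widetilde\min(u,v)=\widetilde\min(v,u)=u$, whence $\widetilde\min(v,\widetilde\min(u,v))=u\ne v$, so $u\,D\,v$ fails; thus $D\subseteq{\equiv}$. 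Conversely, if $u\equiv v$ with $\supp(u)=\supp(v)=I_u=I_v$, Corollary~4 gives $\widetilde\min(u,v)=u$ and $\widetilde\min(v,u)=v$, so both $D$-identities follow by idempotency; the $J$-support case is symmetric, and the principal case is trivial.

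For part~3, Corollary~2 already gives the bijection $[u]_\equiv\mapsto\supp(u)$ of $\scc X/\!\equiv$ onto $s(X)$, and the support-preservation identity says precisely that this bijection intertwines the skew-lattice operations $\widetilde\min,\widetilde\max$ with $\min,\max$ on $s(X)$, making it a lattice isomorphism. For part~4, Corollary~4 directly shows that any pair $u,v$ in a class with common support $I_u=I_v$ satisfies $\widetilde\min(u,v)=u$ and $\widetilde\max(u,v)=v$, so the class is a left-zero band under $\widetilde\min$ and a right-zero band under $\widetilde\max$; the $J$-case is dual, and principal ultrafilters form singleton classes that trivially satisfy both descriptions.
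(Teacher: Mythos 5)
Your proposal is correct, and most of it runs along the same lines as the paper: part~1 from Theorem~2 plus the non-commutativity equivalence~(3), part~4 straight from Corollary~4, and the case analysis $I_u=I_v$ versus $J_u=J_v$ versus principal for classes of $\equiv$. Two points are handled genuinely differently. First, you isolate the identity $\supp(\widetilde\min(u,v))=\min(\supp(u),\supp(v))$ (and its dual) as an explicit lemma and derive both the congruence property and the isomorphism $\scc X/\!\equiv\;\cong s(X)$ from it; the paper leaves this implicit, saying only that these facts ``easily follow from Corollary~4.'' Making the support map an explicit homomorphism is a cleaner organization and arguably the right way to present parts 2--3. Second, for the inclusion $D\subseteq{\equiv}$ the paper invokes the universal property that $X/D$ is the largest lattice quotient of a skew lattice, whereas you disprove $u\,D\,v$ directly when $\supp(u)\ne\supp(v)$ by computing $\widetilde\min(v,\widetilde\min(u,v))=u\ne v$; your argument is more elementary and self-contained (it does not rely on the imported band-theoretic fact), at the cost of an extra computation, while the paper's argument generalizes to any lattice-valued quotient. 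For ${\equiv}\subseteq D$ you check both defining identities $uvu=u$ and $vuv=v$ in each case, where the paper checks only one and leaves the symmetric one to the reader; both are fine. The only place where you are slightly terse is the second disjunct form in part~1 ($\widetilde\min(u,v)=u\vee\widetilde\min(u,v)\ne\widetilde\min(v,u)$), where the forward direction (if $u\trianglelefteq v$ and $\widetilde\min(u,v)\ne u$ then necessarily $\widetilde\min(v,u)=u\ne\widetilde\min(u,v)$) deserves one explicit sentence; but this is exactly as terse as the paper itself, and the gap is trivially filled.
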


\begin{proof}
1.
By reflexivity of~$\trianglelefteq$ and Theorem~2, we have
\begin{align*}
u\trianglelefteq v
&\;\;\lra\;\;
u\:\widetilde\le\:v\,\vee\,v\:\widetilde\ge\:u
\\
&\;\;\lra\;\;
u\:\widetilde\le\:v\,\vee\,v\:\widetilde\ge\:u\,\vee\,u=v
\;\;\lra\;\;
\widetilde\min(u,v)=u\,\vee\,\widetilde\min(v,u)=u.
\end{align*}
The equivalences
$$
u\equiv v\;\;\lra\;\;
\widetilde\min(u,v)\ne\widetilde\min(v,u)\,\vee\,u=v
$$
and 
$$
u\trianglelefteq v\;\;\lra\;\;
\widetilde\min(u,v)\ne\widetilde\min(v,u)\,\vee\,\widetilde\min(u,v)=u
$$
can be deduced either directly from Theorem~2 or by using~(\ref{non-commutativity}).
The characterizations using~$\widetilde\max$ are obtained similarly.

2,~3.
By Corollary~3, $u\equiv v$ is equivalent to $\supp(u)=\supp(v)$, 
which holds for non-principal~$u,v$ either if the support is $I_u=I_v$ 
or if it is $J_u=J_v$. Now it is easily follows from Corollary~4 that 
$\equiv$~is a~congruence of $(\scc X,\widetilde\min,\widetilde\max)$ 
and its quotient is isomorphic the lattice $(s(X),\min,\max)$. 
As the congruence~$D$ has the largest lattice quotient, we conclude that 
$D\subseteq{\equiv}$. To verify the converse inclusion ${\equiv}\subseteq D$,
it suffices to show the following implication: 
$$
u\equiv v\;\,\to\,\;
\widetilde\min\bigl(\widetilde\min(u,v),u\bigr)=u.
$$
This is immediate in the case $\widetilde\min(u,v)=u$ as well as 
in the case $\widetilde\min(u,v)=v\,\wedge\,\widetilde\min(v,u)=u$.
In the remaining case $\widetilde\min(u,v)=\widetilde\min(v,u)=v$ 
we use Theorem~2 to conclude that $u=v$ and so the implication holds too.

4.
By Corollary~4, if $\supp(u)=I_u$ then the $\equiv$-equivalence class 
of~$u$ is a~left-zero band for~$\widetilde\min$ and a~right-zero band 
for~$\widetilde\max$, and dually if $\supp(u)=J_u$. If $\supp(u)=\{x\}$
then of course the class is the singleton~$\{u\}$.
\end{proof}


We see that the ultrafilter extension of a~given linear order, as well as 
of other relations and operations definable via it, allows a~clear and easy 
description. This is so, roughly speaking, because the theory of linear orders 
is easy. Ultrafilters having the same supports behave in the same way, so 
each $\equiv$-equivalence class can be identified with the filter that is its 
intersection (i.e.~with a~filter that is generated either (i)~by one point, or 
(ii)~by all final segments of an initial segment without the last element, or 
else (iii)~by all initial segments of a~final segment without the first element).
These filters, in turn, can be identified with ultrafilters on the Boolean algebra 
of definable subsets, which has a~rather simple structure since the theory is easy.


\begin{task}
Study ultrafilter extensions of partially ordered sets and 
related algebras (semilattices, lattices, Boolean algebras, 
etc.), and also their skew generalizations.
\end{task}

It may be hypothesed that the extensions are again some skew algebras, 
however, a~proof requires new arguments since now ultrafilters can be 
concentrated on antichains.


\begin{ack}
I~am grateful to Nikolay Poliakov for his query about ultrafilter extensions 
of linear orders and the operations of minimum and maximum, stimulated me to 
write this note, and for his comments on my manuscript. I~thank Ganna Kudryavtseva 
for her question about relationships of $\equiv$ to~$D$, for informing me about 
recent studies in skew lattices, also for commenting the manuscript, and especially 
for her hospitality during my visit to the Ljubljana University, partially supported 
by ARRS grant P1-0288. Finally, I~acknowledge a~partial support of this research by 
RFBR grants 11-01-00958 and 11-01-93107, and NSh grant 5593-2012-1.                 
\end{ack}



\end{document}